\documentclass{amsart}
\usepackage[utf8]{inputenc}
\usepackage{latexsym,amssymb,amsmath,amsthm,amscd,graphicx}
\usepackage{amsfonts}
\usepackage{xcolor}
\usepackage{tikz}
\usepackage[english]{babel}
\usepackage[a4paper,bindingoffset=0.2in,%
            left=1in,right=1in,top=1in,bottom=1in,%
            footskip=.25in]{geometry}
\newtheorem{theorem}{Theorem}[section]
\newtheorem{prop}[theorem]{Proposition}
\newtheorem{lemma}[theorem]{Lemma}
\newtheorem{remark}[theorem]{Remark}
\newtheorem{cor}[theorem]{Corollary}
\newtheorem{example}[theorem]{Example}
\numberwithin{equation}{section}

\begin{document}

\title[Birkhoff angles in normed spaces]{On Birkhoff angles in normed spaces}

\author[H.~Gunawan]{Hendra Gunawan}
\address{Faculty of Mathematics and Natural Sciences, Bandung Institute of Technology, Bandung 40132, Indonesia}
\email{hgunawan@math.itb.ac.id}

\author[M.~Jamaludin]{Muhamad Jamaludin}
\address{Faculty of Mathematics and Natural Sciences, Bandung Institute of Technology, Bandung 40132, Indonesia}
\email{mjamaludin@students.itb.ac.id}

\author[M.D.~Pratamadirdja]{Mas Daffa Pratamadirdja}
\address{Faculty of Mathematics and Natural Sciences, Bandung Institute of Technology, Bandung 40132, Indonesia}
\email{pratamadirdja@students.itb.ac.id}

\subjclass[2010]{46B20}

\keywords{Birkhoff angles, Birkhoff orthogonality, normed spaces}

\begin{abstract}
    Associated to Birkhoff orthogonality, we study Birkhoff angles in a normed space and present some of their
    basic properties. We also discuss how to decide whether an angle is more acute or more obtuse than another.
    In addition, given two vectors $x$ and $y$ in a normed space, we study the formula for Birkhoff `cosine' of
    the angle from $x$ to $y$ from which we can, in principal, compute the angle. Some examples will be presented.
\end{abstract}

\maketitle

\section{Introduction}

Let $(X,\|\cdot\|)$ be a real normed space. (Throughout this note, $X$ will be assumed so, unless otherwise stated.)
Unlike in an inner product space, there is not a standard definition of orthogonality, let alone the definition of
angles, in $X$. We know, for instance, the notion of Pythagorean orthogonality (denoted by $\perp_P$), which states that
\[
x\perp_P y ~{\rm if~and~only~if}~ \|x-y\|^2=\|x\|^2+\|y\|^2,
\]
and the notion of isosceles orthogonality (denoted by $\perp_I$), which states that
\[
x\perp_I y ~{\rm if~and~only~if}~ \|x+y\|=\|x-y\|,
\]
for $x,y\in X$, as introduced by R.C.~James in \cite{James1945}. Both of these definition coincides with the usual definition of
orthogonality when $X$ is an inner product space and the norm is induced by the inner product.

In 1935, G. Birkhoff \cite{Birkhoff} introduced a different notion of orthogonality in a normed space, inspired by the property
of the tangent to a circle in the Euclidean space (see Figure 1.1). A vector $x\in X$ is said to be $B$-orthogonal to another
vector $y\in X$, denoted by $x \perp_B y$, if and only if
\begin{equation}\label{B-ineq}
\|x+\lambda y\|\ge \|x\|
\end{equation}
for every $\lambda \in \mathbb{R}$. This notion of orthogonality is then known as Birkhoff orthogonality, as studied in \cite{James}.
One may observe that this definition also coincides with the usual orthogonality when $X$ is an inner product space and
the norm is induced by the inner product. However, unlike Pythagorean orthogonality and isosceles orthogonality, Birkhoff
orthogonality is not symmetric: $x\perp_B y$ does not imply $y \perp_B x$. For discussions on various notions of orthogonality
in normed spaces, see \cite{AMW, BHMT, Par}.

\begin{center}
\begin{tikzpicture}
               \draw [thick, gray, ->] (0,-3) -- (0,3)  
               node [above, black] {$x_2$};              

               \draw [thick, gray, ->] (-3,0) -- (3,0)      
               node [right, black] {$x_1$};              
               \draw [green,line width=1pt] (0,0) circle (1cm) ;
               \draw [thick, black, ->] (0,0) -- (-0.7071,0.7071)  ;    
               \draw [thick, black, ->] (0,0) -- (0.7071,0.7071)  ;    
               \draw [thick, gray, -] (1.4142+1,-1) -- (-1,1.4142+1)  ;    
               \draw [thick, red, ->] (0,0) -- (-0.5,1.4142+0.5)  ;    
               \draw [thick, red, ->] (0,0) -- (1.4142+0.5,-0.5)  ;    
               \draw [thick, red, ->] (0,0) -- (0,1.4142)  ;    
               \draw [thick, red, ->] (0,0) -- (1.4142,0)  ;    
               \draw [thick, red, ->] (0,0) -- (0.5,1.4142-0.5)  ;    
               \draw [thick, red, ->] (0,0) -- (1.4142-0.5,0.5)  ;    
               \node [right] at (0.7071,0.7071) {$x$};
               \node [left] at (-0.7071,0.7071) {$y$};
               \node [right,red] at (0,1.4142) {$x+\lambda y$};
\end{tikzpicture}
\end{center}
\begin{center}
\small{
{\tt Figure 1.1}. A vector $x$ is $B$-orthogonal to another vector $y$ in $\ell_2^2$ ($\mathbb{R}^2$ with the usual
norm). The circle has radius $\|x\|$.
}
\end{center}

\begin{center}
\begin{tikzpicture}
               \draw [thick, gray, ->] (0,-3) -- (0,3)  
               node [above, black] {$x_2$};              

               \draw [thick, gray, ->] (-3,0) -- (3,0)      
               node [right, black] {$x_1$};              
               \draw [thick, green, -] (1,1)--(1,-1)--(-1,-1)--(-1,1)--(1,1)  ;    
               \draw [thick, black, ->] (0,0)--(1,0.5)  ;    
               \draw [thick, black, ->] (0,0)--(0,1)  ;    
               \draw [thick, red, ->] (0,0)--(1,0)  ;    
               \draw [thick, red, ->] (0,0)--(1,-0.5)  ;    
               \draw [thick, red, ->] (0,0)--(1,-1)  ;    
               \draw [thick, red, ->] (0,0)--(1,1)  ;    
               \draw [thick, red, ->] (0,0)--(1,-1.5)  ;    
               \draw [thick, red, ->] (0,0)--(1,1.5)  ;    
               \draw [thick, gray, -] (1,-2.5)--(1,2.5)  ;    
               \draw [thick, blue, -] (0,-2.5)--(0,2.5)  ;    
               \node [right] at (1,0.5) {$x$};
               \node [left] at (0,1) {$y$};
               \node [right,red] at (1,1) {$x+\lambda y$};
\end{tikzpicture}
\end{center}
\begin{center}\small{
{\tt Figure 1.2}. A vector $x$ is $B$-orthogonal to another vector $y$ in $\ell_2^\infty$ ($\mathbb{R}^2$
with the supremum norm). The square is the set of vectors with length $\|x\|$. Note that here the vector $y$
is not $B$-orthogonal to $x$.
}
\end{center}

\medskip

Once we define an orthogonality criterion in $X$, we may ask how we can define angles in $X$ that are compatible with the
orthogonality defined there. If we use Pythagorean orthogonality, then we can define the angle {\it between} two nonzero
vectors $x$ and $y$ in $X$ by
\[
A_P(x,y):=\arccos \frac{\|x\|^2+\|y\|^2-\|x-y\|^2}{2\|x\|\|y\|}.
\]
If we adopt isosceles orthogonality, then we can define the angle between two nonzero vectors $x$ and $y$ in $X$ by
\[
A_I(x,y):=\arccos \frac{\|x+y\|^2-\|x-y\|^2}{4\|x\|\|y\|}.
\]
(See \cite{GLN} for some properties of Pythagorean angles and isosceles angles.) Now, if we opt to define Birkhoff orthogonality
in $X$, how should we define the angles {\it from} a vector {\it to} another vector in $X$? To answer this question, we
must dig deeper to the case where $X$ is an inner product space. This paper presents some results that might be of interest
to the readers, {\bf especially undergraduate students who are interested in geometry of normed spaces}. For further readings,
we refer the readers to \cite{BHMT, Milicic, ZWLY}.

\section{Preliminary Observation: Acute and Obtuse B-Angles}

For the moment, let $(X,\langle\cdot,\cdot\rangle)$ be an inner product space, and $\|x\|:=\langle x,x\rangle^{1/2}$ be the induced
norm on $X$. Let $x\in  X$ and $y\in X\setminus\{0\}$. By simple calculations, one may observe that
\begin{itemize}
    \item when $\langle x,y\rangle\geq 0$ (that is, $x$ and $y$ form an acute angle), the inequality (\ref{B-ineq}) holds
    for $\lambda \leq -\frac{2\langle x,y\rangle}{\|y\|^2}$ or $\lambda \ge 0$.
    \item when $\langle x,y\rangle \leq 0$ (that is, $x$ and $y$ form an obstuse angle), the inequality (\ref{B-ineq}) holds
    for $\lambda \leq 0$ or $\lambda \geq -\frac{2\langle x,y\rangle}{\|y\|^2}$.
    \item when $\langle x,y\rangle =0$ (that is, $x$ and $y$ are orthogonal to each other), the inequality (\ref{B-ineq}) holds
    for every $\lambda \in \mathbb{R}$.
\end{itemize}
The sets of the values of $\lambda$ that satisfies the inequality (\ref{B-ineq}) are visualized in the following figure:

\begin{center}
   \begin{tikzpicture}
               \draw [thick, black, -] (-3,0) -- (3,0) ;
               \draw [thick, blue, ->] (-1,0.25)--(-3,0.25) ;
               \draw [thick, blue, ->] (0,0.25)--(3,0.25) ;
               \draw [thick, black, -] (-1,-0.25) -- (-1,0.25) ;
               \draw [thick, black, -] (0,-0.25) -- (0,0.25) ;
               \fill (0,0) circle (0.05);
               \fill (-1,0) circle (0.05);
               \node [below] at (-1,-0.25) {$-\gamma$};
               \node [below] at (0,-0.25) {$0$};
\end{tikzpicture}
\\
\begin{tikzpicture}
               \draw [thick, black, -] (-3,0) -- (3,0) ;
               \draw [thick, blue, ->] (0,0.25)--(-3,0.25) ;
               \draw [thick, blue, ->] (1,0.25)--(3,0.25) ;
               \draw [thick, black, -] (0,-0.25) -- (0,0.25) ;
               \draw [thick, black, -] (1,-0.25) -- (1,0.25) ;
               \fill (1,0) circle (0.05);
               \fill (0,0) circle (0.05);
               \node [below] at (0,-0.25) {$0$};
               \node [below] at (1,-0.25) {$\gamma$};
\end{tikzpicture}
\\
\begin{tikzpicture}
               \draw [thick, black, -] (-3,0) -- (3,0) ;
               \draw [thick, blue, <->] (3,0.25)--(-3,0.25) ;
               \draw [thick, black, -] (0,-0.25) -- (0,0.25) ;
               \fill (0,0) circle (0.05);
               \node [below] at (0,-0.25) {$0$};
\end{tikzpicture}
\end{center}
\begin{center}\small{
{\tt Figure 2.1}. The sets of the values of $\lambda$ for which the inequality (\ref{B-ineq}) holds when
(a) $\langle x,y\rangle\geq0$, (b) $\langle x,y\rangle\leq0$, and (c) $\langle x,y\rangle=0$, where
$\gamma:=\frac{2|\langle x,y\rangle|}{\|y\|^2}$.
}
\end{center}

\medskip

Note that when $\langle x,y\rangle\not=0$ (that is, $x$ and $y$ form a proper acute or obtuse angle), the value of
$\gamma:=\frac{2|\langle x,y\rangle|}{\|y\|^2}$ is stricly positive. This means that, there exist some values of
$\lambda$ for which the inequality (\ref{B-ineq}) fails to hold.

We now go back to normed spaces in general. Based on the above observation, we can define acute and obstuse angle
in a normed space $(X,\|\cdot\|)$ through the sets of the values of $\lambda$ satisfying the inequality (\ref{B-ineq}).
Let $x,y\in X$. We say that
\begin{itemize}
    \item $x$ forms an \textbf{acute B-angle} to $y$, denoted by $x\,A_B\,y$, if the inequality (\ref{B-ineq}) holds for every $\lambda\geq 0$.
    \item $x$ forms an \textbf{obtuse B-angle} to $y$, denoted by $x\,O_B\,y$, if the inequality (\ref{B-ineq}) holds for every $\lambda\leq 0$.
    \item $x$ is \textbf{B-orthogonal} to $y$, denoted by $x\perp_B y$, if $x$ forms an acute B-angle and an obstuse B-angle to $y$
    simultaneously.
\end{itemize}

As studied in \cite{MJ, MDP}, we have the following propositions. We leave the proof of the first proposition to the readers.

\begin{prop}
Let $x,y\in X\setminus \{0\}$ and $a,b\in\mathbb{R}\setminus\{0\}$.
{\parindent=0cm
\begin{enumerate}
    \item Suppose that $x\,A_B\,y$. If $ab>0$ (that is, $a$ and $b$ have the same sign), then $ax\,A_B\,by$.
    If $ab<0$, then $ax\,O_B\,by$.
    \item Suppose that $x\,O_B\,y$. If $ab>0$, then $ax\,O_B\,by$. If $ab<0$, then $ax\,A_B\,by$.
\end{enumerate}
}
\end{prop}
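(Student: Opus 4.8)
The plan is to reduce each of the four implications to the single defining inequality (\ref{B-ineq}) by exploiting the positive homogeneity of the norm, and then to track how the sign of the quotient $b/a$ reparametrizes the scalar over which we quantify. Throughout I will use the identity $\|cv\| = |c|\,\|v\|$, valid for any scalar $c$ and any vector $v$, together with the observation that $\operatorname{sgn}(b/a) = \operatorname{sgn}(ab)$, since $ab = a^2(b/a)$ and $a^2 > 0$.

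First I would unwind what a statement such as $ax\,A_B\,by$ means: by definition, $\|ax + \mu(by)\| \ge \|ax\|$ for every $\mu \ge 0$. Writing $\|ax + \mu by\| = |a|\,\|x + (\mu b/a)y\|$ and $\|ax\| = |a|\,\|x\|$, and cancelling the strictly positive factor $|a|$ (here $a \ne 0$ is used), this is equivalent to $\|x + (\mu b/a)y\| \ge \|x\|$ for every $\mu \ge 0$. The analogous reduction for $ax\,O_B\,by$ produces the same inequality quantified instead over $\mu \le 0$.

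Next I would substitute $\lambda := (b/a)\mu$ and read off the range of $\lambda$. If $ab > 0$ then $b/a > 0$, so the half-line $\{\mu \ge 0\}$ is carried onto $\{\lambda \ge 0\}$ and $\{\mu \le 0\}$ onto $\{\lambda \le 0\}$; if $ab < 0$ then $b/a < 0$, so these two half-lines are interchanged. Applying this to the reduction above, the condition $ax\,A_B\,by$ becomes $x\,A_B\,y$ when $ab>0$ and becomes $x\,O_B\,y$ when $ab<0$; similarly $ax\,O_B\,by$ becomes $x\,O_B\,y$ when $ab>0$ and becomes $x\,A_B\,y$ when $ab<0$. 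Matching these equivalences against the hypotheses yields all four implications of parts (1) and (2) at once. Since every case is a verbatim repetition of the same reduction, it suffices to record the argument once.

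There is no genuine obstacle here; the only point requiring care is the bookkeeping of signs, namely confirming that $b/a$ carries the same sign as $ab$, so that the quantifying interval is preserved when $ab>0$ and reflected when $ab<0$. Everything else is the cancellation of the positive factor $|a|$, which is precisely where the hypotheses $a,b \ne 0$ (and the nonvanishing of $x,y$, ensuring the angles are genuinely defined) enter.
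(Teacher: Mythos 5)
Your proof is correct, and it is the natural argument the paper has in mind: the paper explicitly leaves this proposition's proof to the reader, so there is no written proof to compare against. Your reduction via homogeneity of the norm (cancelling the factor $|a|$) and the sign bookkeeping for $b/a$ establishes all four implications (in fact as equivalences), which is exactly the intended routine verification.
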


\begin{prop}
Let $x,y\in X$. Then we have
{\parindent=0cm
\begin{enumerate}
    \item $x\,A_B\,y$ if and only if there exists $\delta>0$ such that the inequality (\ref{B-ineq}) holds for every $\lambda\in [0,\delta)$.
    \item $x\,O_B\,y$ if and only if there exists $\delta>0$ such that the inequality (\ref{B-ineq}) holds for every $\lambda\in (-\delta,0]$.
    \item $x\perp_B y$ if and only if there exists $\delta>0$ such that the inequality (\ref{B-ineq}) holds for every $\lambda\in (-\delta,\delta)$
    simultaneously.
\end{enumerate}
}
\end{prop}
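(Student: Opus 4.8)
The plan is to observe that the forward implications in all three parts are immediate---if the inequality (\ref{B-ineq}) holds for every $\lambda$ on a half-line (or on all of $\mathbb{R}$), then a fortiori it holds on any subinterval, so one may take, e.g., $\delta=1$. Hence all the content lies in the converse implications. The crucial tool I would use is the convexity of the real-valued function $f(\lambda):=\|x+\lambda y\|$. Indeed, for $\lambda_1,\lambda_2\in\mathbb{R}$ and $t\in[0,1]$, writing $x+(t\lambda_1+(1-t)\lambda_2)y=t(x+\lambda_1 y)+(1-t)(x+\lambda_2 y)$ and applying the triangle inequality yields $f(t\lambda_1+(1-t)\lambda_2)\le t f(\lambda_1)+(1-t)f(\lambda_2)$, so $f$ is convex on $\mathbb{R}$.

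For the converse of (1) I would argue as follows. Assume (\ref{B-ineq}) holds for every $\lambda\in[0,\delta)$, i.e.\ $f(\lambda)\ge f(0)$ there, and fix any $\lambda_0\in(0,\delta)$. The key property of a convex function is that its difference quotients based at a fixed point are nondecreasing: for $0<\lambda_0<\lambda$,
\[
\frac{f(\lambda_0)-f(0)}{\lambda_0}\le \frac{f(\lambda)-f(0)}{\lambda}.
\]
Since the left-hand side is nonnegative by hypothesis, so is the right-hand side, whence $f(\lambda)\ge f(0)$ for every $\lambda>\lambda_0$. Combined with the hypothesis on $[0,\delta)$, this gives $f(\lambda)\ge f(0)$ for all $\lambda\ge 0$, which is exactly $x\,A_B\,y$.

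Part (2) I would reduce to part (1) by the substitution $\mu=-\lambda$: since $\|x+\lambda y\|=\|x+\mu(-y)\|$, the inequality (\ref{B-ineq}) holding on $(-\delta,0]$ for the pair $(x,y)$ is equivalent to it holding on $[0,\delta)$ for the pair $(x,-y)$, and $x\,O_B\,y$ is equivalent to $x\,A_B\,(-y)$; applying part (1) to $(x,-y)$ then finishes the argument. (Alternatively one may simply run the symmetric slope estimate with difference quotients based at $0$ taken from the left.) Finally, part (3) follows from (1) and (2) together with the definition of B-orthogonality: if (\ref{B-ineq}) holds on $(-\delta,\delta)$, then it holds on both $[0,\delta)$ and $(-\delta,0]$, so $x\,A_B\,y$ and $x\,O_B\,y$ hold simultaneously, that is, $x\perp_B y$; the reverse implication is again immediate.

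The argument is essentially routine once convexity is in hand. The only point demanding a little care is the monotonicity-of-slopes property of convex functions, which I expect to be the main (though entirely standard) step, along with the bookkeeping needed to verify that the extended range $\lambda>\lambda_0$ together with the original interval $[0,\delta)$ exactly exhausts $[0,\infty)$.
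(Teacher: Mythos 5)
Your proposal is correct and is essentially the paper's own argument: the paper's contradiction step---writing $x+\frac{\lambda^\prime}{n}y$ as the convex combination $\frac{n-1}{n}x+\frac{1}{n}(x+\lambda^\prime y)$ and applying the triangle inequality---is exactly the convexity of $f(\lambda)=\|x+\lambda y\\|$ that you invoke, just run by contradiction rather than through monotone difference quotients. Your handling of parts (2) and (3) also matches the paper, which proves (2) by the symmetric argument and obtains (3) as a consequence of (1) and (2).
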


\begin{proof}
We shall only prove the first statement, as the second one can be proven in a similar way and
the third one is a consequence of the first and the second ones. Now, the `only if' part is immediate,
and so we only need to prove the `if' part. Suppose that there exists $\delta>0$ such that the
inequality $\|x+\lambda y\|\ge \|x\|$ holds for every $\lambda\in [0,\delta)$. If $x$ or $y$ equals $0$, then
the inequality obviously holds for every $\lambda\ge 0$. So assume that $x,y\not=0$. Suppose that, to the contrary,
there exists $\lambda^\prime\ge \delta$ such that $\|x+\lambda^\prime y\|<\|x\|$. Choose $n\in\mathbb{N}$
such that $\frac{\lambda^\prime}{n}\in(0,\delta)$. By the hypothesis,
we have $\left\|x+\frac{\lambda^\prime}{n} y\right\|\ge \|x\|$. But, by the triangle inequality, we find that
\begin{align*}
\left\|x+\frac{\lambda^\prime}{n}y\right\|&=\left\|\frac{n-1}{n}x+\frac{1}{n}(x+\lambda^\prime y)\right\|\\
&\le \frac{n-1}{n}\|x\|+\frac{1}{n}\|x+\lambda^\prime y\|\\
&< \frac{n-1}{n}\|x\|+\frac{1}{n}\|x\|\\
&=\|x\|.
\end{align*}
Thus we obtain a contradiction. Therefore, we conclude that the inequality $\|x+\lambda y\|\ge \|x\|$ must hold
for every $\lambda\ge 0$.
\end{proof}

\section{Proper Acute and Proper Obtuse B-Angles}

If $X$ is equipped with an inner product $\langle\cdot,\cdot\rangle$ and the induced norm $\|x\|:=\langle x,x\rangle^{1/2}$,
we find that two nonzero vectors $x$ and $y$ form a proper acute angle if and only if there exists $\gamma>0$ such that
$\|x+\lambda y\|<\|x\|$ precisely for every $\lambda \in (-\gamma,0)$. Likewise, $x$ and $y$ form a proper obtuse angle
if and only if there exists $\gamma>0$ such that $\|x+\lambda y\|<\|x\|$ precisely for every $\lambda \in (0,\gamma)$.
The value of $\gamma$ in both cases is given by $\gamma:=\frac{2|\langle x,y\rangle|}{\|y\|^2}$. Our aim now
is to formulate the criteria for proper acute and proper obtuse B-angles in a normed space $(X,\|\cdot\|)$.

Let $x,y\in X$. We define that
\begin{itemize}
\item $x$ forms a {\bf proper acute B-angle} to $y$, denoted by $x\,PA_B\,y$, if $x\,A_B\,y$ but $x\,\not\perp_B\,y$.
\item $x$ forms a {\bf proper obtuse B-angle} to $y$, denoted by $x\,PO_B\,y$, if $x\,O_B\,y$ but $x\,\not\perp_B\,y$.
\end{itemize}
Note that, given $x,y\in X$, we now have three exclusive possibilities: $x\,PA_B\,y$, $x\,PO_B\,y$, or $x\,\perp_B\,y$.

As in inner product spaces, we obtain analogous results in normed spaces, as stated in the following theorem.

\begin{theorem}\label{T3-1}
Let $x,y\in X$. Then there are only three (exclusive) possibilities for the set of values of $\lambda$ for which
the inequality (\ref{B-ineq}) holds (or fails to hold), namely:
{\parindent=0cm
\begin{enumerate}
    \item There exists $\gamma>0$ such that the inequality (\ref{B-ineq}) fails to hold precisely for every $\lambda\in (-\gamma,0)$.
    \item There exists $\gamma>0$ such that the inequality (\ref{B-ineq}) fails to hold precisely for every $\lambda\in (0,\gamma)$.
    \item The inequality (\ref{B-ineq}) holds for every $\lambda\in \mathbb{R}$.
\end{enumerate}
}
\end{theorem}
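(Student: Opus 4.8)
The plan is to reduce everything to the study of the single real function $f(\lambda):=\|x+\lambda y\|$ and its strict sublevel set
\[
F:=\{\lambda\in\mathbb{R}:\|x+\lambda y\|<\|x\|\}=\{\lambda\in\mathbb{R}:f(\lambda)<f(0)\},
\]
which is exactly the set of $\lambda$ for which the inequality (\ref{B-ineq}) fails. The theorem follows once I show that $F$ is always either empty, or a bounded open interval whose right endpoint is $0$, or a bounded open interval whose left endpoint is $0$. The two structural facts I would establish first are that $f$ is \emph{convex} and \emph{continuous}. Convexity comes from $f(t\lambda_1+(1-t)\lambda_2)=\|t(x+\lambda_1y)+(1-t)(x+\lambda_2y)\|\le t f(\lambda_1)+(1-t)f(\lambda_2)$ for $t\in[0,1]$, and (Lipschitz) continuity from the reverse triangle inequality $|f(\lambda)-f(\mu)|\le|\lambda-\mu|\,\|y\|$.

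From these two facts, $F$ is a strict sublevel set of a convex function, hence convex, i.e. an interval; and since $f$ is continuous, $F$ is open. Because $f(0)$ is never strictly less than itself, $0\notin F$. An open interval that omits $0$ must lie entirely in $(-\infty,0)$, entirely in $(0,\infty)$, or be empty. The empty case is precisely conclusion (3).

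The main step, and the place where convexity does the real work, is to show that a nonempty $F$ has $0$ as one of its endpoints. Suppose $F\subseteq(0,\infty)$ and $F\ne\emptyset$, so that $F=(a,b)$ with $0\le a<b$. If we had $a>0$, I would pick any $\lambda_0\in(a,b)$ and write $a$ as the convex combination $a=\bigl(1-\tfrac{a}{\lambda_0}\bigr)\cdot 0+\tfrac{a}{\lambda_0}\cdot\lambda_0$ with $\tfrac{a}{\lambda_0}\in(0,1)$; convexity of $f$ together with $f(\lambda_0)<f(0)$ then forces $f(a)<f(0)$, so $a\in F$, contradicting that $a$ is the (open) left endpoint. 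Hence $a=0$ and $F=(0,b)$. The symmetric argument on the other side handles $F\subseteq(-\infty,0)$, giving $F=(-\gamma,0)$.

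It then remains to see that the finite endpoint is a genuine positive real. First, a nonempty $F$ forces $y\ne0$, since $y=0$ makes $f$ constant and $F=\emptyset$. With $y\ne0$, the estimate $f(\lambda)\ge|\lambda|\,\|y\|-\|x\|$ shows $f(\lambda)\to\infty$ as $|\lambda|\to\infty$, so $f(\lambda)\ge f(0)$ for all large $|\lambda|$ and $F$ is bounded; thus $b$ (respectively $\gamma$) is finite and positive, yielding conclusion (2) (respectively (1)). The three cases are mutually exclusive, as a nonempty interval to the right of $0$, a nonempty interval to the left of $0$, and the empty set are pairwise distinct, and they are exhaustive by the trichotomy above. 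I expect the only delicate point to be the endpoint argument, which guarantees that $0$ is always an endpoint rather than an interior point of the failure set; the remaining verifications are routine applications of convexity and continuity.
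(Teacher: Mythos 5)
Your proof is correct. The failure set $F$ coincides with the paper's set $S(x,y)$, and every step checks out: convexity of $f(\lambda)=\|x+\lambda y\|$ makes $F$ convex, continuity makes it open, $0\notin F$ forces a one-sided interval, your convex-combination argument rules out a left endpoint $a>0$, and coercivity ($f(\lambda)\ge|\lambda|\,\|y\|-\|x\|$ when $y\ne 0$) gives boundedness.

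The underlying mechanism is the same as the paper's --- everything ultimately rests on the triangle inequality applied to convex combinations --- but your organization is genuinely different. The paper never names convexity; instead it proves an itemized lemma (its Theorem 3.2) about $S(x,y)$ by bare hands: part (2) redoes your convexity computation twice (once to show $(0,\lambda]\subseteq S$, once to show positive and negative elements cannot coexist), part (4) is your boundedness estimate, and part (6) excludes the endpoints $\inf S$ and $\sup S$ from $S$ by a continuity-and-limits argument, concluding that $\|x+sy\|=\|x\|$ at $s=\sup S$. You get the interval structure and the endpoint exclusion in one stroke from general facts (strict sublevel sets of convex functions are convex; preimages of open sets under continuous maps are open), which makes the trichotomy and its exclusivity essentially immediate; the only bespoke step left is showing $0$ is an endpoint. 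What the paper's longer route buys is self-containedness for its intended undergraduate audience --- nothing beyond the triangle inequality and limits is invoked --- plus the explicit byproduct that the norm equality $\|x+sy\|=\|x\|$ holds at the finite endpoint, a fact your openness argument gives only implicitly (the endpoint satisfies $f(s)\ge f(0)$, and $\le$ follows by continuity, but you never need to say so). Either proof would serve; yours is the one a convex-analysis reader would write.
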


To prove the theorem, we consider the following set
\[
S(x,y):=\{\lambda\in\mathbb{R} : \|x+\lambda y\|<\|x\|\},
\]
for $x,y\in X$. The above theorem is a consequence of the following statements.

\begin{theorem}\label{T3-2}
Let $x,y\in X$. Then each of the following statements hold:
{\parindent=0cm
\begin{enumerate}
    \item $0\notin S(x,y)$.
    \item If there exists $\lambda>0$ such that $\lambda \in S(x,y)$, then $(0,\lambda]\subseteq S(x,y)$ and $\mu\notin S(x,y)$ for every $\mu\le0$.
    \item If there exists $\lambda<0$ such that $\lambda \in S(x,y)$, then $[\lambda,0)\subseteq S(x,y)$ and $\mu\notin S(x,y)$ for every $\mu\ge0$.
    \item $S(x,y)$ is bounded.
    \item If $S(x,y)\not=\emptyset$, then $0=\inf S(x,y)<\sup S(x,y)$ {\bf or} $\inf S(x,y)<\sup S(x,y)=0$.
    \item If $S(x,y)\not=\emptyset$, then $\inf S(x,y)\notin S(x,y)$ {\bf and} $\sup S(x,y)\notin S(x,y)$.
\end{enumerate}
}
\end{theorem}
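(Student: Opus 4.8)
The key observation driving everything is that, with $x,y\in X$ fixed, the map $f(\lambda):=\|x+\lambda y\|$ is a \emph{convex} and \emph{continuous} function of $\lambda\in\mathbb{R}$: convexity follows from $f(\alpha s+(1-\alpha)t)=\|\alpha(x+sy)+(1-\alpha)(x+ty)\|\le\alpha f(s)+(1-\alpha)f(t)$, and continuity from continuity of the norm. With this in hand, $S(x,y)=\{\lambda:f(\lambda)<f(0)\}$ is precisely the strict sublevel set of a convex function at its value at $0$, and all six statements reduce to standard facts about such sets. I would prove them in the order (1), (2), (3), (4), (6), (5), since (5) is most conveniently assembled from the others.

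Statement (1) is immediate, as $f(0)=\|x\|\not<\|x\|$. For (2), suppose $\lambda>0$ lies in $S$. For the inclusion $(0,\lambda]\subseteq S$, I would write any $t\in(0,\lambda]$ as $t=\alpha\lambda+(1-\alpha)\cdot 0$ with $\alpha=t/\lambda\in(0,1]$ and combine convexity with $f(\lambda)<f(0)$ to get the strict bound $f(t)<f(0)$. For the claim that no $\mu\le 0$ lies in $S$, the point $\mu=0$ is handled by (1), and for $\mu<0$ I would instead write $0$ as a convex combination $0=\beta\mu+(1-\beta)\lambda$ with $\beta\in(0,1)$ and read the convexity inequality backwards: since $f(\lambda)<f(0)$, the inequality $f(0)\le\beta f(\mu)+(1-\beta)f(\lambda)$ forces $f(\mu)>f(0)$, so $\mu\notin S$. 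Statement (3) is the mirror image of (2), obtained by the same argument with the positive and negative half-lines interchanged.

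For (4), I would dispose of the trivial case $y=0$ (where $S=\emptyset$) and otherwise use the reverse triangle inequality $f(\lambda)\ge|\lambda|\,\|y\|-\|x\|$; requiring this to be $<\|x\|$ bounds $|\lambda|$ by $2\|x\|/\|y\|$, so $S$ is bounded. Statement (6) then follows cleanly from (4) together with continuity: if $S\ne\emptyset$, then $\sup S$ and $\inf S$ are finite, and were $\sup S\in S$ we would have $f(\sup S)<f(0)$, whence continuity yields a two-sided neighborhood of $\sup S$ contained in $S$ and thus a point of $S$ exceeding $\sup S$, a contradiction; the same argument excludes $\inf S$. Finally, (5) is an assembly step: choosing a nonzero element $\lambda_0\in S$ (nonzero by (1)), its sign is forced to be consistent across all of $S$ by (2) and (3), and the inclusion $(0,\lambda_0]\subseteq S$ (respectively $[\lambda_0,0)\subseteq S$) together with $S$ lying entirely on one side of $0$ pins one endpoint to be exactly $0$ and the other strictly on the opposite side.

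I expect the only real care to be needed in statement (2): its two halves pull the strict inequality $f(\lambda)<f(0)$ in opposite directions, once to propagate strictness \emph{into} the interval and once to propagate it \emph{outward} to the far side of $0$, so one must track which convex combination places which point in the middle, to be sure the convexity inequality points the right way. Everything else is routine once convexity and continuity of $f$ have been recorded.
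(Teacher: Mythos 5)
Your proposal is correct and follows essentially the same route as the paper: your convexity inequality for $f(\lambda)=\|x+\lambda y\|$ is exactly the paper's inline triangle-inequality manipulations in parts (2)--(3), your reverse-triangle-inequality bound for (4) is identical, and your continuity argument for (6) (openness of the strict sublevel set) is a minor variant of the paper's one-sided-limit argument, with (5) assembled from the earlier parts in both cases. The only cosmetic difference is that you name convexity explicitly, which tidies the bookkeeping but does not change the substance.
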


\begin{proof}
{\parindent=0cm
\begin{enumerate}
\item Obvious.
\item Let $\lambda>0$ and $\lambda\in S(x,y)$. It follows from the first statement that $0\notin S(x,y)$.
    Now, for $0<\lambda^\prime<\lambda$, we observe that
    \begin{align*}
        \left|\left|x+\lambda^\prime y\right|\right|&=\left|\left|\frac{\lambda-\lambda^\prime}{\lambda}x+\frac{\lambda^\prime}{\lambda}x+
        \lambda^\prime y\right|\right|
        \\ &\leq \frac{\lambda-\lambda^\prime}{\lambda}\|x\|+\frac{\lambda^\prime}{\lambda}\|x+\lambda y\|
        \\ &< \frac{\lambda-\lambda^\prime}{\lambda}\|x\|+\frac{\lambda^\prime}{\lambda}\|x\|
        \\ &=\|x\|,
    \end{align*}
    whence $\lambda^\prime\in S(x,y)$. This proves that $(0,\lambda]\subseteq S(x,y)$.
    Next, suppose that there exists $\mu<0$ such that $\mu\in S(x,y)$. Notice that for $t_0=\frac{-\mu}{\lambda-\mu}\in (0,1)$, we have
    \[0=(1-t_0)\mu+t_0 \lambda.\]
    Hence
    \begin{align*}
        \|x\|&=\|x+[(1-t_0)\mu+t_0\lambda]y\|
        \\ &\leq (1-t_0)\|x+\mu y\|+t_0\|x+\lambda y\|
        \\ &<(1-t_0)\|x\|+t_0\|x\|
        \\ &=\|x\|,
    \end{align*}
    which cannot be true. Therefore we conclude that $\mu\notin S(x,y)$ for every $\mu\le0$.
    \item Similar to the proof of (2).
    \item Assuming that $S(x,y)\neq \emptyset$, let $\lambda\in S(x,y)$. We observe that
    \[
    \big| \|x\|-|\lambda|\,\|y\|\big|\leq \|x+\lambda y\|<\|x\|.
    \]
    The assumption that $S(x,y)\neq \emptyset$ means that $x,y\not=0$. Using the above inequality, we obtain
    \[
    0<|\lambda|<\frac{2\|x\|}{\|y\|}.
    \]
    Hence $S(x,y)$ is bounded.
    \item It follows from (2), (3), and (4).
    \item Suppose that $S(x,y)\neq \emptyset$. Assume that there exists $\lambda^*>0$ such that $\lambda^*\in S(x,y)$.
    By (2), (4), and (5), $\inf S(x,y)=0\notin S(x,y)$ and $\sup S(x,y)$ exists. Let $s:=\sup S(x,y)$.
    Now the function $f:\mathbb{R}\to\mathbb{R}$ given by $f(\lambda)=\|x+\lambda y\|$ is continuous everywhere, with
    \begin{align*}
        f(\lambda)&< \|x\| \text{ if } \lambda\in (0,s),\ \text{and}
        \\ f(\lambda)&\geq \|x\| \text{ if } \lambda\leq 0\text{ or }\lambda>s.
    \end{align*}
    Hence
    \[
    \|x\|\leq \lim_{\lambda\to s^+}f(\lambda)=f(s)=\lim_{\lambda\to s^-}f(\lambda)\leq \|x\|.\]
    Thus $\|x+s y\|=f(s)=\|x\|$, which implies that $\sup S(x,y)\notin S(x,y)$. By similar arguments, if there
    exists $\lambda^*<0$ such that $\lambda^*\in S(x,y)$, then $\inf S(x,y)\notin S(x,y)$.
\end{enumerate}
}
\end{proof}

\begin{remark}
In view of Theorem \ref{T3-2}, the value of $\gamma$ in Theorem \ref{T3-1}, part (1), equals $-\inf S(x,y)$; while
that in part (2) equals $\sup S(x,y)$.
\end{remark}

From Theorem \ref{T3-1}, parts (1) and (2), we have the following corollary.

\begin{cor}
Let $x,y\in X$. Then we have
\begin{enumerate}
\item $x\,PA_B\,y$ if and only if there exists $\gamma>0$ such that (\ref{B-ineq}) fails to hold precisely for every $\lambda\in (-\gamma,0)$.
\item $x\,PO_B\,y$ if and only if there exists $\gamma>0$ such that (\ref{B-ineq}) fails to hold precisely for every $\lambda\in (0,\gamma)$.
\end{enumerate}
\end{cor}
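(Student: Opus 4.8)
The plan is to translate each of the conditions $x\,PA_B\,y$ and $x\,PO_B\,y$ into a statement about the set $S(x,y)$ and then read off the result directly from Theorem \ref{T3-1}. Recall that $S(x,y)$ is exactly the set of $\lambda$ for which the inequality (\ref{B-ineq}) \emph{fails}; hence the phrase ``(\ref{B-ineq}) fails to hold precisely for every $\lambda\in(-\gamma,0)$'' means $S(x,y)=(-\gamma,0)$, and similarly ``precisely for every $\lambda\in(0,\gamma)$'' means $S(x,y)=(0,\gamma)$. In this language, the three exclusive possibilities in Theorem \ref{T3-1} assert that $S(x,y)$ equals $(-\gamma,0)$, or $(0,\gamma)$, or $\emptyset$.

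First I would unpack the defining conditions in terms of $S(x,y)$. By definition, $x\,A_B\,y$ means (\ref{B-ineq}) holds for every $\lambda\ge0$, which is the same as $S(x,y)\cap[0,\infty)=\emptyset$; likewise $x\,O_B\,y$ amounts to $S(x,y)\cap(-\infty,0]=\emptyset$, and $x\perp_B y$ amounts to $S(x,y)=\emptyset$. Consequently, $x\,PA_B\,y$ (that is, $x\,A_B\,y$ but $x\,\not\perp_B\,y$) is equivalent to the assertion that $S(x,y)$ contains no nonnegative number yet is nonempty; and a nonempty $S(x,y)$ that avoids $[0,\infty)$ necessarily contains some negative number.

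For the ``only if'' direction of part (1), suppose $x\,PA_B\,y$. Then $S(x,y)$ is nonempty and contains a negative element, so by the exclusive trichotomy of Theorem \ref{T3-1} it can fall only into case (1); that is, there exists $\gamma>0$ with $S(x,y)=(-\gamma,0)$, which is precisely the stated condition. Conversely, if (\ref{B-ineq}) fails exactly on $(-\gamma,0)$, then $S(x,y)=(-\gamma,0)$ contains no $\lambda\ge0$, so $x\,A_B\,y$ holds, while $-\tfrac{\gamma}{2}\in S(x,y)$ witnesses that (\ref{B-ineq}) fails for a negative $\lambda$, so $x\,\not\perp_B\,y$; hence $x\,PA_B\,y$. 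Part (2) then follows by the identical argument with the roles of positive and negative $\lambda$ interchanged.

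There is no substantial obstacle here once Theorem \ref{T3-1} is available: the only point requiring a little care is to confirm that the geometric trichotomy ($PA_B$ / $PO_B$ / $\perp_B$) matches term-by-term with the trichotomy for $S(x,y)$, and in particular that a nonempty $S(x,y)$ containing a negative element cannot simultaneously be of the form $(0,\gamma)$. This last point is exactly the exclusivity already guaranteed by Theorem \ref{T3-1}, so the corollary is essentially a bookkeeping translation of that theorem through the definitions of $A_B$, $O_B$, and $\perp_B$.
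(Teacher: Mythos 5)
Your proof is correct and takes essentially the same route as the paper: the paper states this corollary without proof as an immediate consequence of Theorem \ref{T3-1}, parts (1) and (2), and your argument is precisely that derivation, carefully spelled out by translating $PA_B$, $PO_B$, and $\perp_B$ into conditions on the set $S(x,y)$ and matching them against the trichotomy. The bookkeeping is accurate in both directions, so nothing is missing.
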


As for acute and obtuse $B$-angles, we also have the following results for proper acute and proper obtuse B-angles.

\begin{prop}\label{invariant}
Let $x,y\in X\setminus \{0\}$ and $a,b\in\mathbb{R}\setminus\{0\}$.
{\parindent=0cm
\begin{enumerate}
    \item Suppose that $x\,PA_B\,y$. If $ab>0$, then $ax\,PA_B\,by$.
    If $ab<0$, then $ax\,PO_B\,by$.
    \item Suppose that $x\,PO_B\,y$. If $ab>0$, then $ax\,PO_B\,by$.
    If $ab<0$, then $ax\,PA_B\,by$.
\end{enumerate}
}
\end{prop}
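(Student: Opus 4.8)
The plan is to build on the scaling rule for acute and obtuse B-angles already recorded in Proposition 2.1, so that the only genuinely new input needed is how Birkhoff orthogonality itself behaves under rescaling of the two arguments. Recalling that $x\,PA_B\,y$ means $x\,A_B\,y$ together with $x\,\not\perp_B\,y$ (and analogously for $PO_B$), each of the four implications will follow once I know (i) what happens to the acute/obtuse part, which Proposition 2.1 supplies directly, and (ii) that the failure of B-orthogonality is preserved under nonzero scaling of either factor.

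The key lemma I would isolate is the homogeneity of Birkhoff orthogonality: for $a,b\in\mathbb{R}\setminus\{0\}$ and $x,y\in X\setminus\{0\}$,
\[
ax\,\perp_B\,by \iff x\,\perp_B\,y.
\]
I would prove this by a reparametrization of $\lambda$. Since $\|ax+\mu by\|=|a|\,\bigl\|x+\tfrac{\mu b}{a}y\bigr\|$ and $\|ax\|=|a|\,\|x\|$, the condition $\|ax+\mu by\|\ge\|ax\|$ for every $\mu\in\mathbb{R}$ is equivalent to $\|x+\tfrac{\mu b}{a}y\|\ge\|x\|$ for every $\mu\in\mathbb{R}$; and as $\mu$ ranges over $\mathbb{R}$ the quantity $\tfrac{\mu b}{a}$ also ranges over all of $\mathbb{R}$ (because $a,b\ne 0$), so this is exactly $x\,\perp_B\,y$. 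Taking contrapositives, $ax\,\not\perp_B\,by$ if and only if $x\,\not\perp_B\,y$. Equivalently, in terms of the set $S$ introduced just before Theorem~\ref{T3-2}, the same computation yields $S(ax,by)=\tfrac{a}{b}\,S(x,y)$, so in particular $S(ax,by)=\emptyset$ precisely when $S(x,y)=\emptyset$.

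With the lemma in hand the assembly is routine. For part (1), assume $x\,PA_B\,y$, that is, $x\,A_B\,y$ and $x\,\not\perp_B\,y$. If $ab>0$, Proposition 2.1 gives $ax\,A_B\,by$, while the lemma gives $ax\,\not\perp_B\,by$; hence $ax\,PA_B\,by$. If $ab<0$, Proposition 2.1 instead gives $ax\,O_B\,by$, and again $ax\,\not\perp_B\,by$, so $ax\,PO_B\,by$. Part (2) is handled in the same way, starting from $x\,O_B\,y$ and $x\,\not\perp_B\,y$ and invoking the second item of Proposition 2.1. I do not expect a serious obstacle: all the content sits in the rescaling identity for $\perp_B$, and the only point requiring a little care is tracking the sign of $a/b$, so that a positive factor leaves the side of the angle unchanged while a negative factor turns acute into obtuse, consistent with the reflection encoded in $S(ax,by)=\tfrac{a}{b}\,S(x,y)$.
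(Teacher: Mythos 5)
Your proposal is correct, but note that the paper itself gives no proof of this proposition at all --- it is stated as an analogue of Proposition 2.1 (whose proof is also left to the readers), so there is no official argument to match against; what follows compares your route with the one the paper's machinery suggests. Your decomposition is: acute/obtuse behaviour from Proposition 2.1, plus a homogeneity lemma $ax\perp_B by \iff x\perp_B y$ proved by the reparametrization $\|ax+\mu by\|=|a|\,\|x+\tfrac{\mu b}{a}y\|$, which is a bijective change of variable since $a,b\neq 0$; the assembly is then immediate and sound. The alternative, which the paper's Section 3 seems designed for, is to use your own observation $S(ax,by)=\tfrac{a}{b}\,S(x,y)$ together with the corollary to Theorem \ref{T3-1}: $x\,PA_B\,y$ means $S(x,y)=(-\gamma,0)$ for some $\gamma>0$, and then $S(ax,by)=\tfrac{a}{b}(-\gamma,0)$ equals $\bigl(-\tfrac{a}{b}\gamma,0\bigr)$ when $ab>0$ and $\bigl(0,\bigl|\tfrac{a}{b}\bigr|\gamma\bigr)$ when $ab<0$, giving all four implications in one stroke. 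That route is slightly more economical because it does not lean on Proposition 2.1, which the paper also leaves unproven --- your proof is therefore conditional on a statement you have not verified, though citing it is legitimate since the paper asserts it earlier. Either way, the real content is the scaling identity, and you have it right, including the sign bookkeeping that turns acute into obtuse when $a/b<0$.
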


\section{Comparing Two B-Angles}

Let $x,y\in X$. Throughout this section, we shall consider the case where $x\,PA_B\,y$. (The discussion
for the case where $x\,PO_B y$ is similar.) For the purpose of our discussion here, we write $\gamma(x,y)$
for the largest value of $\gamma$ for which the inequality (\ref{B-ineq}) fails to hold precisely for
every $\lambda\in (-\gamma,0)$.

Let $x,y_1,y_2\in X\setminus\{0\}$. If $x\,PA_B\,y_1$ and $x\,O_B\,y_2$ (which includes the possibility that
$x\perp_B y_2$), then it is safe to say that the B-angle from $x$ to $y_1$ is more acute than that from $x$
to $y_2$. The question now is: what can we say when $x\,PA_B\,y_1$ and $x\,PA_B\,y_2$? How can we tell that one
B-angle is more acute than the other? Suppose that $\gamma(x,y_1) < \gamma(x,y_2)$ (see the figure below).

\begin{center}
\begin{center}
\begin{tikzpicture}
               \draw [thick, black, -] (-3,0) -- (3,0) ;
               \draw [thick, blue, ->] (-1,0.25)--(-3,0.25) ;
               \draw [thick, blue, ->] (0,0.25)--(3,0.25) ;
               \draw [thick, black, -] (-1,-0.25) -- (-1,0.25) ;
               \draw [thick, black, -] (0,-0.25) -- (0,0.25) ;
               \fill (0,0) circle (0.05);
               \fill (-1,0) circle (0.05);
               \node [below] at (0,-0.25) {$0$};
               \node [below] at (-1,-0.25) {$-\gamma(x,y_1)$};
\end{tikzpicture}
\\
\begin{tikzpicture}
               \draw [thick, black, -] (-3,0) -- (3,0) ;
               \draw [thick, blue, ->] (-2,0.25)--(-3,0.25) ;
               \draw [thick, blue, ->] (0,0.25)--(3,0.25) ;
               \draw [thick, black, -] (-2,-0.25) -- (-2,0.25) ;
               \draw [thick, black, -] (0,-0.25) -- (0,0.25) ;
               \fill (0,0) circle (0.05);
               \fill (-2,0) circle (0.05);
               \node [below] at (0,-0.25) {$0$};
               \node [below] at (-2,-0.25) {$-\gamma(x,y_2)$};
\end{tikzpicture}
\end{center}
{\small
{\tt Figure 4.1}. The blue rays indicated the sets of the values of $\lambda$ for which $\|x+\lambda y_1\|\ge \|x\|$
and $\|x+\lambda y_2\|\ge \|x\|$ hold, respectively.
}
\end{center}

\medskip

Intuitively, we might want to conclude that the B-angle from $x$ to $y_2$ is more acute than that from $x$ to $y_1$
(that is, the larger the value of $\gamma(x,y)$, the more acute the B-angle from $x$ to $y$). However, this conclusion
is too early to make. Observe the following example in $\ell_2^\infty$.

Let $x:=(1,0),\ y_c:=(c,c) \in \ell_2^\infty$, where $c>0$. By observation, we see that $x\,PA_B\, y_c$ (see the figure below).

\begin{center}
\begin{center}
    \begin{tikzpicture}
               \draw [thick, gray, ->] (0,-2.5) -- (0,2.5)  
               node [above, black] {$x_2$};              

               \draw [thick, gray, ->] (-2.5,0) -- (2.5,0)      
               node [right, black] {$x_1$};              
               \draw [thick, green, -] (1,1)--(1,-1)--(-1,-1)--(-1,1)--(1,1)  ;    
               \draw [thick, black, ->] (0,0)--(1,0)  ;    
               \draw [thick,black , ->] (0,0)--(1.5,1.5)  ;    
               \draw [thick, gray, -] (-1.5,-2.5)--(2.5,1.5)  ;    
               \node [left] at (1.5,1.5) {$y_c$};
               \node [above] at (1,0) {$x$};
               \draw [thick, red, ->] (0,0)--(0.5,-0.5)  ;    
               \draw [thick, red, ->] (0,0)--(1.5,0.5)  ;    
               \draw [thick, red, ->] (0,0)--(2,1)  ;    
               \draw [thick, red, ->] (0,0)--(0,-1)  ;    
               \node [right,red] at (2,1) {$x+\lambda y_c$};
    \end{tikzpicture}
\end{center}
{\small
{\tt Figure 4.2}. The vector $x$ forms a proper acute B-angle to the vector $y$ in $\ell_2^\infty$.
}
\end{center}

\medskip

Let us now take two different values of $c$, say $c_1=1$ and $c_2=2$, so that we have $y_1:=(1,1)$
and $y_2:=(2,2)$. By simple calculation, we obtain $\gamma(x,y_1)=1$ and $\gamma(x,y_2)=\frac12$.
Thus $\gamma(x,y_2)<\gamma(x,y_1)$. However, we would {\bf not} say that the B-angle from $x$ to $y_1$ is
more acute than that from $x$ to $y_2$ since $y_2=2y_1$. In such a case, we would want to have the
B-angle from $x$ to $y_1$ {\bf equal to} that from $x$ to $y_2$. To compare two proper B-acute angles,
we need to find a number which in general depends on the two vectors but invariant under positive scalar
multiplications. In order to do so, we go back to inner product spaces, to get an inspiration.

Suppose, for the moment, $X$ is equipped with an inner product $\langle\cdot,\cdot\rangle$ and its
induced norm $\|x\|:=\langle x,x\rangle^{1/2}$. Let $x,y\in X\setminus\{0\}$, and consider the expression
$\frac{\langle x,y\rangle}{\|x\|\,\|y\|}$, which is nothing but the cosine of the angle between $x$ and
$y$. We note that the value of this expression, and so is the angle, is invariant under positive scalar
multiplications. Indeed, if $x^\prime:=ax$ and $y^\prime:=by$ with $a,b>0$, then
\[
\frac{\langle x',y'\rangle}{\|x'\|\,\|y'\|}=\frac{\langle ax,by\rangle}{\|ax\|\,\|by\|}=
\frac{ab}{ab}\frac{\langle x,y\rangle}{\|x\|\,\|y\|}=\frac{\langle x,y\rangle}{\|x\|\,\|y\|}.
\]
Thus, normalizing both vectors, we get
\[
\langle \hat x,\hat y\rangle=\frac{\langle x,y\rangle}{\|x\|\,\|y\|}
\]
where $\hat{x}:=\frac{x}{\|x\|}$ and $\hat{y}:=\frac{y}{\|y\|}$.

We turn back to our normed space $(X,\|\cdot\|)$. Let $x,y\in X\setminus\{0\}$. Our goal is to find
a number $\gamma^*=\gamma^*(x,y)$ such that $\gamma^*(ax,by)=\gamma^*(x,y)$ whenever $a,b>0$. The key is
the lemma below.

\begin{lemma}
Let $x,y\in X\setminus\{0\}$ and suppose that $x\,PA_B\,y$. If $a,b>0$, then
$ax\,PA_B\,by$ and
\[
\gamma(ax,by)=\frac{a}{b}\,\gamma(x,y).
\]
\end{lemma}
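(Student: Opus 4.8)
The plan is to reduce the whole statement to a single homogeneity property of the set $S(x,y)=\{\lambda\in\mathbb{R}:\|x+\lambda y\|<\|x\|\}$ from Theorem \ref{T3-2}. By the Corollary to Theorem \ref{T3-1} together with the Remark, the hypothesis $x\,PA_B\,y$ is equivalent to $S(x,y)=(-\gamma(x,y),0)$ with $\gamma(x,y)=-\inf S(x,y)>0$. So once I know how $S$ behaves under the replacement $(x,y)\mapsto(ax,by)$, both conclusions will follow at once.

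First I would compute $S(ax,by)$ directly from the homogeneity of the norm. Since $a>0$,
\[
\|ax+\lambda by\|=a\left\|x+\tfrac{b}{a}\lambda\,y\right\|,\qquad \|ax\|=a\|x\|,
\]
so $\lambda\in S(ax,by)$ exactly when $a\|x+\tfrac{b}{a}\lambda y\|<a\|x\|$; dividing by $a>0$ preserves the strict inequality, and this happens precisely when $\tfrac{b}{a}\lambda\in S(x,y)$. That is, the map $\lambda\mapsto\tfrac{b}{a}\lambda$ carries $S(ax,by)$ onto $S(x,y)$, which I would record as $S(ax,by)=\tfrac{a}{b}\,S(x,y)$.

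Then I would substitute $S(x,y)=(-\gamma(x,y),0)$ and use $\tfrac{a}{b}>0$ to obtain
\[
S(ax,by)=\tfrac{a}{b}\bigl(-\gamma(x,y),0\bigr)=\left(-\tfrac{a}{b}\gamma(x,y),\,0\right).
\]
This is a nonempty interval of the form $(-\gamma',0)$ with $\gamma'=\tfrac{a}{b}\gamma(x,y)>0$, so by the Corollary to Theorem \ref{T3-1} we get $ax\,PA_B\,by$, and reading off its left endpoint via the Remark gives $\gamma(ax,by)=-\inf S(ax,by)=\tfrac{a}{b}\gamma(x,y)$, which is exactly the asserted formula.

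I do not expect any genuine obstacle here: the entire argument is the one-line rescaling identity for $S$. The only thing to watch is the sign bookkeeping — I need $a>0$ in order to pull the scalar out of the norm and divide through without flipping the strict inequality, and I need $\tfrac{a}{b}>0$ so that multiplying the interval $(-\gamma,0)$ by $\tfrac{a}{b}$ returns an interval of the same orientation rather than reflecting it to $(0,\gamma')$. (That reflected case is precisely what would produce a proper obtuse B-angle, in agreement with Proposition \ref{invariant}.)
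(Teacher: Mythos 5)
Your proof is correct, and at its core it uses the same one-line rescaling identity as the paper, namely $\|ax+\lambda\, by\|=a\bigl\|x+\tfrac{b}{a}\lambda\, y\bigr\|$; but the logical packaging is genuinely different, and the difference is worth recording. The paper first invokes Proposition \ref{invariant} (which is stated in the paper without proof) to conclude $ax\,PA_B\,by$, so that $\gamma(ax,by)$ is even defined, and then proves the formula as two separate one-sided inequalities: $\gamma(ax,by)\ge \tfrac{a}{b}\gamma(x,y)$ by pushing each $\lambda\in(-\gamma(x,y),0)$ forward under $\lambda\mapsto\tfrac{a}{b}\lambda$, and the reverse inequality by repeating the same argument with $x=\tfrac{1}{a}(ax)$ and $y=\tfrac{1}{b}(by)$. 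You instead prove the exact set identity $S(ax,by)=\tfrac{a}{b}\,S(x,y)$ in a single step --- your rescaling is an equivalence, not merely an implication --- and then read off both conclusions at once from the interval characterization of $PA_B$ supplied by the corollary following Theorem \ref{T3-1} together with the Remark. This buys you two things: the claim $ax\,PA_B\,by$ comes out as a byproduct rather than an input, so your argument does not lean on the unproved Proposition \ref{invariant} (indeed it reproves its positive-scalar case); and the two-sided estimate collapses into one computation, because an equality of sets carries its endpoints with it. What the paper's route buys in exchange is mainly structural economy within the section: it reuses a proposition already on record and follows the familiar two-inclusion template, at the cost of a longer and slightly redundant argument. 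Both proofs are valid; yours is the tighter one, and your closing remark about the sign of $\tfrac{a}{b}$ correctly identifies where the hypothesis $a,b>0$ enters and why its failure would instead land in the $PO_B$ case of Proposition \ref{invariant}.
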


\begin{proof}
Let $a,b>0$. By Proposition \ref{invariant}, we have $ax\,PA_B\,by$. Now suppose that $\lambda\in(-\gamma(x,y),0)$,
where we have $\|x+\lambda y\|<\|x\|$. By simple manipulations, we find that
\[
\|ax+\frac{a}{b}\lambda (by)\|<\|ax\|.
\]
Thus $\frac{a}{b}\lambda \in (-\gamma(ax,by),0)$. Since we also have $\frac{a}{b}\lambda \in (-\frac{a}{b}\gamma(x,y),0)$,
we conclude that $\gamma(ax,by)\ge \frac{a}{b}\gamma(x,y)$. On the other hand, since $x=\frac{1}{a}(ax)$ and $y=\frac{1}{b}(by)$,
we obtain $\gamma(ax,by)\le \frac{a}{b}\gamma(x,y)$ via similar arguments. As a consequence, we arrive at the conclusion that
\[
\gamma(ax,by)=\frac{a}{b}\gamma(x,y),
\]
as claimed.
\end{proof}

We are now ready to define the number $\gamma^*$ with the desired property. Let $x,y\in X$ such that $x\,PA_B\,y$. Here
of course $x,y\not=0$, for otherwise $x$ will be B-orthogonal to $y$ per definition. We then define
\begin{equation}\label{gamma*}
\gamma^*(x,y):=\frac{\|y\|}{\|x\|}\gamma(x,y).
\end{equation}
Accordingly, we have the following lemma.

\begin{lemma}
Let $x,y\in X\setminus\{0\}$ and suppose that $x\,PA_B\,y$. If $a,b>0$, then $\gamma^*(ax,by)=\gamma^*(x,y)$.
\end{lemma}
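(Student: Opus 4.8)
The plan is to reduce everything to the previous lemma and the definition (\ref{gamma*}) of $\gamma^*$ by a direct computation; no new geometric input is needed. First I would check that the quantity $\gamma^*(ax,by)$ is even defined. Since $x\,PA_B\,y$ and $a,b>0$ give $ab>0$, Proposition \ref{invariant} yields $ax\,PA_B\,by$, so that $ax$ and $by$ are both nonzero and $\gamma(ax,by)$ exists; hence the formula (\ref{gamma*}) applies to the pair $(ax,by)$ as well.

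Next I would simply unfold the definition. Writing out (\ref{gamma*}) for the scaled vectors gives
\[
\gamma^*(ax,by)=\frac{\|by\|}{\|ax\|}\,\gamma(ax,by).
\]
Now I would invoke two facts: the positive homogeneity of the norm, $\|by\|=b\|y\|$ and $\|ax\|=a\|x\|$ (valid because $a,b>0$), and the previous lemma, which supplies $\gamma(ax,by)=\frac{a}{b}\gamma(x,y)$. Substituting both,
\[
\gamma^*(ax,by)=\frac{b\|y\|}{a\|x\|}\cdot\frac{a}{b}\,\gamma(x,y)=\frac{\|y\|}{\|x\|}\,\gamma(x,y)=\gamma^*(x,y),
\]
which is exactly the claim.

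There is really no hard step here: the content of the result lies entirely in the previous lemma, which provides the scaling $\gamma(ax,by)=\frac{a}{b}\gamma(x,y)$. The only point worth emphasizing is \emph{why} the particular normalizing factor $\frac{\|y\|}{\|x\|}$ in (\ref{gamma*}) is the right choice. Under $x\mapsto ax$ and $y\mapsto by$ this factor scales by $\frac{b}{a}$, which precisely cancels the factor $\frac{a}{b}$ coming from $\gamma$; so the product is invariant. This cancellation is the whole motivation behind the definition of $\gamma^*$, and the proof amounts to nothing more than verifying that the two scalings annihilate each other.
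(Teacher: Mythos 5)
Your proof is correct and follows essentially the same route as the paper: unfold the definition (\ref{gamma*}), apply the homogeneity of the norm, and invoke the previous lemma's scaling identity $\gamma(ax,by)=\frac{a}{b}\gamma(x,y)$ so the factors cancel. Your preliminary check that $ax\,PA_B\,by$ (via Proposition \ref{invariant}) is a nice touch of rigor that the paper leaves implicit, but the substance is identical.
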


\begin{proof}
Let $a,b>0$. By (\ref{gamma*}), we have
\[
\gamma^*(ax,by)=\frac{\|by\|}{\|ax\|}\gamma(ax,by)=\frac{b\|y\|}{a\|x\|}\frac{a}{b}\gamma(x,y)=\frac{\|y\|}{\|x\|}\gamma(x,y)=\gamma^*(x,y),
\]
as desired.
\end{proof}

Based on the above lemma, given $x,y\in X$ such that $x\,PA_B\,y$, we have $\gamma^*(x^\prime,y^\prime)=\gamma^*(x,y)$ for every
$x^\prime=ax$ and $y^\prime=by$ with $a,b>0$. Using this fact, we can now consider the unit vectors $\hat{x}:=\frac{x}{\|x\|}$ and
$\hat{y}:=\frac{y}{\|y\|}$, where we have
\[
\gamma^*(x,y)=\gamma^*(\hat x,\hat y)=\gamma(\hat x,\hat y).
\]
This equality tells us that to compare acute B-angles, it suffices for us to compare the values of $\gamma(\cdot,\cdot)$
among the associated unit vectors. The same is also true for $x,y\in X$ for which $x\,PO_B\,y$. To be precise, we define the following.

Let $x,y_1,y_2 \in X\setminus\{0\}$ such that $x\,PA_B\,y_1$ and $x\,PA_B\,y_2$. Let $\hat{x},\ \hat{y}_1,$ and $\hat{y}_2$ be the unit vectors
associated to $x,y_1,$ and $y_2$, respectively. We say that
\begin{itemize}
\item the B-angle from $x$ to $y_1$ is {\bf more acute} than that from $x$ to $y_2$ if $\gamma(\hat x,\hat{y}_1)>\gamma(\hat x,\hat{y}_2)$.
\item the B-angle from $x$ to $y_1$ is {\bf the same} as that from $x$ to $y_2$ if $\gamma(\hat x,\hat{y}_1)=\gamma(\hat x,\hat{y}_2)$.
\item the B-angle from $x$ to $y_1$ is {\bf more obtuse} than that from $x$ to $y_2$ if $\gamma(\hat x,\hat{y}_1)<\gamma(\hat x,\hat{y}_2)$.
\end{itemize}
(Remark that similar definitions can be formulated for $x,y_1,y_2 \in X\setminus\{0\}$ such that $x\,PO_B\,y_1$ and $x\,PO_B\,y_2$.)

In the same spirit, we can also compare the B-angles for $x_1,x_2,y\in X\setminus\{0\}$ such that $x_1\,PA_B\,y$ and $x_2\,PA_B\,y$. Here we say
that
\begin{itemize}
\item the B-angle from $x_1$ to $y$ is {\bf more acute} than that from $x_2$ to $y$ if $\gamma(\hat{x}_1,\hat{y})>\gamma(\hat{x}_2,\hat{y})$.
\item the B-angle from $x_1$ to $y$ is {\bf the same} as that from $x_2$ to $y$ if $\gamma(\hat{x}_1,\hat{y})=\gamma(\hat{x}_2,\hat{y})$.
\item the B-angle from $x_1$ to $y$ is {\bf more obtuse} than that from $x_2$ to $y$ if $\gamma(\hat{x}_1,\hat{y})<\gamma(\hat{x}_2,\hat{y})$.
\end{itemize}
(As in the previous case, similar definitions can be formulated for $x_1,x_2,y \in X\setminus\{0\}$ such that $x_1\,PO_B\,y$ and $x_2\,PO_B\,y$.)

\begin{example}\label{Example1}
{\rm
Let $x:=(1,0),y:=(y_1,y_2)\in \ell_2^\infty$. Firstly, let us consider the case where $y:=(a,1)$ with $0<a<1$.

\begin{center}
\begin{center}
\begin{tikzpicture}
               \draw [thick, gray, ->] (0,-3) -- (0,3)  
               node [above, black] {$x_2$};              
               \draw [thick, gray, ->] (-3,0) -- (3,0)      
               node [right, black] {$x_1$};              
               \draw [thick, green, -] (1,1)--(1,-1)--(-1,-1)--(-1,1)--(1,1)  ;    
               \draw [thick, black, ->] (0,0) -- (1,0)      
               node [above, black] {$x=\hat x$};              
               \draw [thick, red, ->] (0,0) -- (0.5,1)      
               node [above, red] {$y=\hat y$};              
\end{tikzpicture}
\end{center}
{\small
{\tt Figure 4.3}. The vectors $x=(1,0)$ and $y=(a,1)$ with $0<a<1$ in $\ell_2^\infty$.
}
\end{center}

\medskip

Note that $\hat x =x$ and $\hat y =y$, and $x\,PA_B\,y$. Indeed, there exists $\gamma(x,y)>0$ such that $\|x+\lambda y\|<\|x\|=1$
precisely for every $\lambda\in (-\gamma(x,y),0)$. To find the value of $\gamma(x,y)$, we solve the inequality
\[
\max\{|1+\lambda a|,|\lambda|\}=\|x+\lambda y\|<\|x\|=1
\]
for $\lambda$. We obtain that $\lambda$ must satisfy $-2/a<\lambda<0$ and $-1<\lambda<0$. Since $0<a<1$, we have $1/a>1$, and so
we find that $\lambda\in (-1,0)$. Therefore $\gamma(x,y)=1$, independent of the value of $a\in(0,1)$. In this case, the B-angle
from $x=(1,0)$ to $y_1=(a_1,1)$ is the same as that from $x=(1,0)$ to $y_2=(a_2,1)$ whenever $0<a_1,a_2<1$.

Secondly, let us consider the case where $x:=(1,0),y:=(1,a)$ with $0<a\leq1$.

\begin{center}
\begin{center}
\begin{tikzpicture}
               \draw [thick, gray, ->] (0,-3) -- (0,3)  
               node [above, black] {$x_2$};              
               \draw [thick, gray, ->] (-3,0) -- (3,0)      
               node [right, black] {$x_1$};              
               \draw [thick, green, -] (1,1)--(1,-1)--(-1,-1)--(-1,1)--(1,1)  ;    
               \draw [thick, black, ->] (0,0) -- (1,0)      
               node [above, black] {$x=\hat x$};              
               \draw [thick, red, ->] (0,0) -- (1,0.5)      
               node [above, red] {$y=\hat y$};              
\end{tikzpicture}
\end{center}
{\small
{\tt Figure 4.4}. The vectors $x=(1,0)$ and $y=(1,a)$ with $0<a\leq 1$ in $\ell_2^\infty$.
}
\end{center}

\medskip

Note that $\hat y =y$ and $x\,PA_B\,y$. The value of $\gamma(x,y)>0$ for which $\|x+\lambda y\|<\|x\|=1$ precisely for every
$\lambda\in (-\gamma(x,y),0)$ can be found by solving the inequality
\[
\max\{|1+\lambda|,|a\lambda|\}=\|x+\lambda y\|<\|x\|=1
\]
for $\lambda$. Here we obtain that $-2<\lambda<0$ and $-\frac{1}{a}<\lambda<\frac{1}{a}$, which yields $\lambda\in (-\min\{2,\frac1{a}\},0)$.
For $0<a\leq\frac12$, we have $\frac1{a}\ge2$, and so $\min\{2,\frac1{a}\}=2$. For $\frac12<a\leq 1$, we have $1\leq \frac1{a}<2$,
which gives $\min\{2,\frac1{a}\}=\frac1{a}$.
Therefore $\gamma(x,y)=2$ for $a\in (0,1/2]$ and $\gamma(x,y)=\frac{1}{a}$ for $a\in(\frac12,1]$.
In this example, the B-angle from $x=(1,0)$ to $y_1=(1,a_1)$ is more acute than that from $x=(1,0)$ to $y_2=(1,a_2)$ provided that
$0<a_1\le \frac12 <a_2\le1$.
}
\end{example}

\begin{example}\label{Example2}
{\rm
Let $x:=(1,0),y=(a,\sqrt{1-a^2})\in \ell_2^2$ with $0<a<1$.
\begin{center}
    \begin{center}
    \begin{tikzpicture}
               \draw [thick, gray, ->] (0,-3) -- (0,3)  
               node [above, black] {$x_2$};              

               \draw [thick, gray, ->] (-3,0) -- (3,0)      
               node [right, black] {$x_1$};              
               \draw [green,line width=1pt] (0,0) circle (1cm) ;
               \draw [cyan,line width=1pt] (1,0) circle (1cm) ;
               \draw [thick, black, ->] (0,0) -- (0.7071,0.7071)      
               node [below, black] {$y$};
               \draw [thick, black, ->] (0,0) -- (1,0)      
               node [below, black] {$x$};              
               \draw [thick, red, ->] (0,0) -- (1.7071,0.7071)      
               node [right, red] {$x+y$};              
               \draw [thick, gray, -] (-2,-3) -- (3,2)  ;    
               \node [left,gray] at (2.5,1.5) {$x+\lambda y$};
\end{tikzpicture}
\end{center}
{\small
{\tt Figure 4.5}. The vectors $x=(1,0)$ and $y=(a,\sqrt{1-a^2})$ with $0<a<1$ in $\ell_2^2$.
}
\end{center}

\medskip

Observe that $\|x\|=\|y\|=1$ and $x\,PA_B\,y$. We can find the value of $\gamma(x,y)>0$ for which $\|x+\lambda y\|<\|x\|=1$ precisely
for every $\lambda\in (-\gamma(x,y),0)$. By simple computations, one may obtain that
\[
\gamma(x,y)=\frac{2\langle x,y\rangle}{\|y\|^2}=\frac{2a}{a^2+(1-a^2)}=2a.
\]
Thus here $\gamma(x,y)$ gets larger as $a$ tends to 1, that is, as $y$ is approaching $x$.
}
\end{example}

\section{Concluding Remarks: The Analog of the Cosine of B-Angles}

We have defined acute and obtuse B-angles in a normed space $(X,\|\cdot\|)$ via the sets of values of $\lambda$ for which
the inequality (\ref{B-ineq}) holds (or fails to hold), and defined the criteria for proper acute and proper
obtuse B-angles from a vector $x$ to another vector $y$ in $X$. We have also defined a number $\gamma^*(\cdot,\cdot)$
which can be used to compare the B-angle from $x$ to $y_1$ and that from $x$ to $y_2$, or from $x_1$ to $y$ and
that from $x_2$ to $y$. It is then tempting to have a formula for the B-angle from a vector to another vector in $X$, as
in an inner product space.

As the readers might have guessed by now, the formula is only one step away from the formula of $\gamma(\cdot,\cdot)$ on
the unit sphere. Let $x,y\in X\setminus\{0\}$. Define
\[
k(x,y)=\begin{cases}\frac{1}{2}\gamma(\hat{x},\hat{y}), &\text{if } x\,PA_B\,y;
\\ 0, &\text{if } x\perp_B y;
\\ -\frac{1}{2}\gamma(\hat{x},\hat{y}), &\text{if } x\,PO_B\,y.
\end{cases}
\]
Notice that if $X$ is an inner product space which is also equipped with the induced norm, then
$k(x,y)=\langle \hat{x},\hat{y}\rangle$, which is nothing but the cosine of the angle between $x$ and $y$.

The following proposition gives basic properties of the function $k(x,y)$, which may be viewed as
the {\bf cosine of the B-angle} from $x$ to $y$.

\begin{prop}
Let $x,y\in X\setminus\{0\}$ and $a,b\not=0$. Then the following statements hold:
\begin{enumerate}
    \item $|k(x,y)|\leq 1$.
    \item If $ab>0$, then $k(ax,by)=k(x,y)$. If $ab<0$, then $k(ax,by)=-k(x,y)$.
\end{enumerate}
\end{prop}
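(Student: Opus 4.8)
The plan is to handle both parts by reducing every quantity appearing in the definition of $k$ to the single set $S(\hat x,\hat y)$, since by Theorem \ref{T3-1} (and the corollary that follows it) this set alone determines both the B-angle type and the value of $\gamma$ on the unit sphere. For part (1), the orthogonal case is trivial ($k=0$), so suppose $x\,PA_B\,y$ or $x\,PO_B\,y$; then $|k(x,y)|=\tfrac12\gamma(\hat x,\hat y)$, and it suffices to prove $\gamma(\hat x,\hat y)\le 2$. I would apply Theorem \ref{T3-2}(4) to the pair $(\hat x,\hat y)$: every $\lambda\in S(\hat x,\hat y)$ obeys $|\lambda|<2\|\hat x\|/\|\hat y\|=2$ because $\|\hat x\|=\|\hat y\|=1$. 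Hence $S(\hat x,\hat y)\subseteq(-2,2)$, so whichever of $-\inf S(\hat x,\hat y)$ or $\sup S(\hat x,\hat y)$ equals $\gamma(\hat x,\hat y)$ is at most $2$, giving $|k(x,y)|\le 1$.

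For part (2), the crux is the elementary identity
\[
S(\epsilon u,\eta v)=\epsilon\eta\,S(u,v),\qquad \epsilon,\eta\in\{1,-1\},\ u,v\in X\setminus\{0\},
\]
which I would obtain by factoring $\epsilon u+\lambda\eta v=\epsilon(u+\epsilon\eta\lambda v)$, using $\|\epsilon u\|=\|u\|$, and substituting $\mu=\epsilon\eta\lambda$. Writing $a=\mathrm{sgn}(a)|a|$ and $b=\mathrm{sgn}(b)|b|$ and noting $\widehat{ax}=\mathrm{sgn}(a)\hat x$ and $\widehat{by}=\mathrm{sgn}(b)\hat y$, this specializes to
\[
S(\widehat{ax},\widehat{by})=\mathrm{sgn}(ab)\,S(\hat x,\hat y).
\]
By the positive-scalar invariance of the B-angle type (Proposition \ref{invariant} together with the analogous statement for $\perp_B$ obtained from the first proposition of Section 2), the type between $ax$ and $by$ coincides with the type between $\widehat{ax}$ and $\widehat{by}$; consequently both the type label and the value $\gamma(\widehat{ax},\widehat{by})$ used in $k(ax,by)$ are read off from the single set $S(\widehat{ax},\widehat{by})$.

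It then remains to read off $k$ in the two cases. When $ab>0$ the identity gives $S(\widehat{ax},\widehat{by})=S(\hat x,\hat y)$, so the type and the $\gamma$-value are unchanged and $k(ax,by)=k(x,y)$. When $ab<0$ it gives $S(\widehat{ax},\widehat{by})=-S(\hat x,\hat y)$; reflection across $0$ sends an interval of the form $(-\gamma,0)$ to $(0,\gamma)$ and vice versa while preserving $\gamma$, and sends $\emptyset$ to $\emptyset$, so it interchanges the proper acute and proper obtuse cases and fixes the orthogonal case. Inspecting the three branches of the definition of $k$ then yields $k(ax,by)=-k(x,y)$ in each. I expect the only real obstacle to be organizational rather than analytic: one must track carefully how the sign of $ab$ swaps the $PA_B$ and $PO_B$ labels so that the three-case bookkeeping in the definition of $k$ lines up, since all the genuine analytic content is already contained in Theorems \ref{T3-2} and \ref{T3-1}.
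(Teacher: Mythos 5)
Your proof is correct. Part (1) is essentially the paper's own argument: the paper re-derives inline, via the reverse triangle inequality, the bound $0<|\lambda|<2\|x\|/\|y\|$, which is exactly the estimate you invoke from Theorem \ref{T3-2}(4) (one small caveat: the statement of Theorem \ref{T3-2}(4) only says that $S(x,y)$ is bounded; the explicit bound you quote appears inside its proof, so you are reusing that computation rather than citing a stated result). Part (2), however, takes a genuinely different and more uniform route. The paper works directly with the defining inequalities: it treats only the representative case $ab<0$ with $x\,PA_B\,y$, assumes without loss of generality that $a>0>b$, proves $\gamma(\hat x,\hat y)\le\gamma(\widehat{ax},\widehat{by})$ via the substitution $\lambda=-\lambda'$, obtains the reverse inequality by exchanging the roles of $(x,y)$ and $(ax,by)$, and explicitly leaves all remaining cases to the reader. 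You instead prove the single identity $S(\epsilon u,\eta v)=\epsilon\eta\,S(u,v)$ for signs $\epsilon,\eta$, and then read off both the angle type (via the corollary to Theorem \ref{T3-1}: $PA_B$, $PO_B$, $\perp_B$ correspond to $S=(-\gamma,0)$, $S=(0,\gamma)$, $S=\emptyset$) and the value of $\gamma$ (via the remark after Theorem \ref{T3-2}: $\gamma$ equals $-\inf S$ or $\sup S$) from how $S(\hat x,\hat y)$ transforms. This packages the paper's substitution once and for all, so that every sign pattern and all three branches of $k$, including the orthogonal one, are settled simultaneously; the positive-scalar invariance (Proposition \ref{invariant} together with its $\perp_B$ analogue, which you correctly note requires the first proposition of Section 2) supplies the bridge from $k(ax,by)$ to $S(\widehat{ax},\widehat{by})$. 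What the paper's version buys is concreteness for a first reading; what yours buys is completeness (no cases delegated to the reader) and a clean isolation of the mechanism: the proposition is precisely the statement that $S$ rescales and reflects under scalar multiplication of its arguments.
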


\begin{proof}
\begin{enumerate}
    \item Suppose that we have some values of $\lambda$ satisfying $\|\hat{x}+\lambda \hat{y}\|<1$. Then
    \[
    \big| \|\hat{x}\|-|\lambda|\,\|\hat{y}\|\big|\leq \|\hat{x}+\lambda \hat{y}\|<1.
    \]
    Hence we have $\big|1-|\lambda|\big|<1$, whence $0<|\lambda|<2$. This implies that $\gamma(\hat{x},\hat{y})\leq 2$,
    and therefore $|k(x,y)|= \frac{1}{2}\gamma(\hat{x},\hat{y})\leq 1$.
    \item We shall only prove it for the case where $ab<0$ and $x\,PA_B\,y$, and leave the other cases to the readers.
    Without loss of generality, assume that $a>0>b$. The hypothesis that $x\,PA_B\,y$ means that there exists $\gamma(\hat{x},\hat{y})>0$
    such that $\|\hat {x}+\lambda \hat{y}\|<1$ precisely for every $\lambda \in (-\gamma(\hat{x},\hat{y}),0)$.
    Since $a>0>b$, we have $ax\,PO_B\,by$, which means that there exists $\gamma(\widehat{ax},\widehat{by})>0$ such that
    $\|\widehat {ax}+\lambda \widehat{by}\|<1$ precisely for every $\lambda \in (0,\gamma(\widehat{ax},\widehat{by}))$.
    Now the assumption that $a>0>b$ implies that $\widehat{ax}=\hat{x}$ and $\widehat{by}=-\hat{y}$. For
    $0<\lambda^\prime<\gamma(\hat{x},\hat{y})$, set $\lambda=-\lambda^\prime$. Then $-\gamma(\hat{x},\hat{y})<\lambda<0$, 
    and so we obtain
    \[
    \|\widehat{ax}+\lambda^\prime(\widehat{by})\|=\|\hat{x}+\lambda^\prime(-\hat{y})\|=\|\hat{x}+\lambda\hat{y}\|<1.
    \] 
    This implies that $\gamma(\hat{x},\hat{y})\leq \gamma(\widehat{ax}, \widehat{by})$. 
    Conversely, by setting $x^\prime=ax, y^\prime=by$, we have $x=\frac1{a}x^\prime, y=\frac1{b}y^\prime$, 
    so that $\gamma(\hat{x},\hat{y})\geq \gamma(\widehat{ax}, \widehat{by})$.
    Therefore $\gamma(\widehat{ax}, \widehat{by})=\gamma(\hat{x},\hat{y})$. Consequently, we obtain
    \[
    k(x,y)=\frac{1}{2}\gamma(\hat{x},\hat{y})=\frac{1}{2}\gamma(\widehat{ax}, \widehat{by})=-k(ax,by),
    \]
    as expected.
\end{enumerate}
\end{proof}

Both properties in the above proposition are basic properties of the cosine of the B-angles in $X$.
We miss, however, the symmetric property $k(x,y)=k(y,x)$. To see that we do not have this property,
take for an example $x:=(1,0)$ and $y=(1,1)$ in $\ell_2^\infty$. Here $x\,PA_B\,y$ and $y\perp_B x$,
and thus $k(x,y)>0=k(y,x)$. This example tells us that, in general, $k(x,y)\not=k(y,x)$. (This is
why we do not use the phrase the B-angle {\bf between} $x$ {\bf and} $y$, but the B-angle {\bf from}
$x$ {\bf to} $y$.)

Furthermore, given a vector $x\in X$, the set of vectors $y$ such that $k(x,y)=1$ may consists more than
just $y=x$. We have seen this in Example \ref{Example1} where $X=\ell_2^\infty,\ x:=(1,0)$ and $y:=(1,a)$
with $a\in[0,1/2)$. The same also happens with the set of vectors $y$ such that $k(x,y)=-1$.

Similar to Example \ref{Example1}, we can also compute the values of $k(x,y)$ for $x:=(1,0)$ and
$y:=(\cos \theta,\sin \theta) \in \ell_2^1$ with $\theta\in (-\pi,\pi]$. Here $k$ can be seen as a
function of $\theta$, say $k=f(\theta)$, where
\[
f(\theta):=\begin{cases}1, &-\frac{\pi}{4}<\theta<\frac{\pi}{4};
\\0,&-\frac{3\pi}{4}\leq \theta\leq-\frac{\pi}{4}\text{ or }\frac{\pi}{4}\leq \theta\leq\frac{3\pi}{4};
\\-1,&-\pi<\theta<-\frac{3\pi}{4}\text{ or }\frac{3\pi}{4}< \theta\leq\pi.
\end{cases}
\]
The graph of $k=f(\theta)$ is presented below:
\begin{center}
    \begin{center}
    \begin{tikzpicture}
               \draw [thick, black, ->] (0,-2) -- (0,2)  
               node [above, black] {$f(\theta)$};              

               \draw [thick, gray, ->] (-4,0) -- (4,0)      
               node [right, black] {$\theta$};              
               \draw [thick, blue,-] (-3.1415,-1) -- (-2.3561,-1);
               \draw [thick, blue, -] (-2.3561,0) -- (-0.7853,0);
               \draw [thick, blue, -] (-2.3561,0) -- (-0.7853,0);
               \draw [thick, blue, -] (-0.7853,1) -- (0.7853,1);
               \draw [thick, blue, -] (0.7853,0) -- (2.3561,0);
               \draw [thick, blue, -] (2.3561,-1) -- (3.1415,-1);
               \draw [blue, dashed, -] (-3.1415,0) -- (-3.1415,-1);
               \draw [blue, dashed, -] (-2.3561,-1) -- (-2.3561,0);
               \draw [blue, dashed, -] (-0.7853,0) -- (-0.7853,1);
               \draw [blue, dashed, -] (0.7853,0) -- (0.7853,1);
               \draw [blue, dashed, -] (2.3561,-1) -- (2.3561,0);
               \draw [blue, dashed, -] (3.1415,0) -- (3.1415,-1);
               \draw[blue,thick] (-2.3561,-1) circle (0.075 cm);
               \draw[fill,blue] (-2.3561,0) circle (0.075 cm);
               \draw[fill,blue] (-0.7853,0) circle (0.075 cm);
               \draw[blue,thick] (-0.7853,1) circle (0.075 cm);
               \draw[blue,thick] (0.7853,1) circle (0.075 cm);
               \draw[fill,blue] (0.7853,0) circle (0.075 cm);
               \draw[fill,blue] (2.3561,0) circle (0.075 cm);
               \draw[blue,thick] (2.3561,-1) circle (0.075 cm);
               \draw[fill,blue] (3.1415,-1) circle (0.075 cm);
               \node [below] at (-3.1415,0) {$-\pi$};
               \node [below] at (-2.3561,0) {$-\frac{3\pi}{4}$};
               \node [below] at (-0.7853,0) {$-\frac{\pi}{4}$};
               \node [below] at (0.7853,0) {$\frac{\pi}{4}$};
               \node [below] at (2.3561,0) {$\frac{3\pi}{4}$};
               \node [below] at (3.1415,0) {$\pi$};
\end{tikzpicture}
\end{center}
{\small
{\tt Figure 5.1}. The graph of $k$ as a function of $\theta$ for $\theta\in(-\pi,\pi]$.
}
\end{center}

\bigskip

\noindent{\bf Acknowledgements}. This work is supported by P2MI-ITB 2021 Program.

\medskip

\end{document}